\documentclass[12pt, reqno, a4paper]{amsart}
\usepackage[a4paper, total={6in, 10in}]{geometry}
\usepackage[T1]{fontenc}
\usepackage[utf8]{inputenc}

\usepackage{todonotes}
\usepackage[all]{xy}

\usepackage{amssymb}   % Ekstra symboler
\usepackage{amsthm}      % Teoremaktige omgivelser
\usepackage{thmtools}     % Teoremaktige omgivelser
\usepackage{mathtools}   % Fonter og omgivelser for matematikk
\usepackage{mathrsfs}     % Scriptfont med \mathscr{}
\usepackage{physics}       % Snarveier for partielle deriverte og differensialer 

\usepackage[backend=biber,style=numeric-comp, sorting=nyt, maxnames=99]{biblatex}
\usepackage[hidelinks, hypertexnames=false]{hyperref}
\usepackage[nameinlink, noabbrev, capitalize]{cleveref}
\usepackage{bookmark}

\declaretheorem[style = plain, numberwithin=section]{theorem}

\declaretheorem[style = plain,      sibling = theorem]{lemma}

\usepackage[scaled]{beramono}  % Bera Mono (Bitstream Vera Sans Mono)
\usepackage{listings}
\usepackage{xcolor}

\definecolor{codegreen}{rgb}{0,0.6,0}
\definecolor{codegray}{rgb}{0.5,0.5,0.5}
\definecolor{backcolour}{rgb}{0.95,0.95,0.92}
\definecolor{codegreen}{rgb}{0,  0.6,  0.35}

\lstdefinestyle{mystyle}{
	backgroundcolor=\color{backcolour},   
	commentstyle=\color{codegreen},
	keywordstyle=\color{blue},
	numberstyle=\tiny\color{codegray},
	stringstyle=\color{codegreen},
	basicstyle=\ttfamily\footnotesize,
	breakatwhitespace=false,         
	breaklines=true,                 
	captionpos=b,                    
	keepspaces=true,                 
	numbers=left,                    
	numbersep=5pt,                  
	showspaces=false,                
	showstringspaces=false,
	showtabs=false,                  
	tabsize=2
}
\usepackage[shortlabels]{enumitem}
\setenumerate{label=(\roman{enumi})}

\newcommand{\bbN}{\mathbb{N}}   % Naturlige tall
\newcommand{\bbC}{\mathbb{C}}   % Komplekse tall
\newcommand{\bbF}{\mathbb{F}}
\newcommand{\bbT}{\mathbb{T}}

\newcommand{\cU}{\mathcal{U}}

\usepackage{graphicx}                                                         % Bilder 

\title{Commutators in selfless C*-algebras}

\author{Alexander G. Ravnanger}
\address{ \noindent \newline 
	Department of Mathematical Sciences \newline 
	University of Copenhagen \newline
	Universitetsparken 5 \newline 
	2100 Copenhagen}
\email[]{agr@math.ku.dk}

\begin{document}
	\begin{abstract}
		We show that if \((A, \tau)\) is a selfless C*-probability space with \(\tau\) being a faithful trace, then commutators are dense in \(\ker \tau\). 
	\end{abstract}
	
	\maketitle

\section{Introduction}
	
	The question of when an operator on a Hilbert space can be written as a commutator first receieved serious interest because of the Heisenberg uncertainty principle in physics. The Wielandt-Wintner theorem, see \cite{wielandt,wintner}, states that the identity operator on a Hilbert space cannot be expressed as the commutator of two bounded operators. Wielandt already pointed out in his paper that his proof is valid in any normed unital ring, thus opening the door to an abstract analysis of the problem. A popular variation of the problem is to write elements vanishing on every trace in a C*-algebra as a sum of a bounded number of commutators. Recently, Ciu and Ma deviced a method to reduce sums of commutators to single commutators using matrix algebra. We refer the reader to their paper \cite{cui_ma} and the references therein for an overview of the area. Their result, building on the results of \cite{marcoux}, show that for a broad class of simple C*-algebras, including UHF-algebras, irrational rotation algebras and Cuntz algebras, the set of elements vanishing on all traces coincides with the set of single commutators. This points to a common theme in the literature on the topic: Most avaliable results require some form of abundance of projections, since this allows the reduction of the problem to (approximately) solving matrix equations with respect to a partition of the unit by projections. The projectionless case has been left largely unexplored. 
	
	Alain Valette posed on MathOverflow \cite{MO} the question if every element deleted by the unique trace on the reduced group C*-algebra of a free group is close to a sum of commutators. In fact, much more is true: Any element in a C*-algebra vanishing on all bounded traces is close to a sum of commutators; this follows from a Hahn-Banach argument akin to one in \cite{pop}. Nevertheless, Jesse Peterson provided an elegant answer for the specific situation using Powers' averaging technique. Prompted by the author with the question of whether the representative of a group generator is close to a single commutator in \(C_\lambda^*(\bbF_2)\), Claude Opus 5.5 generated a variation of Peterson's argument using Abel's summation trick to show that this is indeed the case. Inspired by this trick, we show in this note that if \((A, \tau)\) is a C*-probability space with \(\tau\) being a faithful trace, then commutators are dense in \(\ker \tau\). This constitutes a generalization of a result by Robert in his seminal paper \cite{robert}, where he also requires that \(A\) have real rank zero. In particular, it applies to the stably projectionless C*-algebras \(C_\lambda^*(\bbF_n)\) for \(n \geq 2\), which were shown to be selfless in the landmark paper \cite{AGKEP}, and the Jiang-Su algebra, see \cite[Theorem 5.2]{robert}. It remains open if the set of commutators is closed in both cases.
	
	I wish to thank Leonel Robert and Mikael Rørdam for reading drafts of this note. 
 	
\section{Approximation by commutators}

	A \textit{C*-probability space} is a pair \((A, \rho)\) consisting of a unital C*-algebra \(A\) and a state \(\rho\) on \(A\). A C*-probability space \((A, \rho)\) is \textit{selfless} if for some C*-probability space \((C, \kappa)\) such that \(C \neq \bbC\) and \(\kappa\) has faithful GNS-representation, there is an ultrafilter \(\cU\) such that the diagonal embedding \(\Delta \colon A \to A^\cU\) factors through the first factor embedding \(\iota \colon (A, \rho) \mathop{*} (C, \kappa)\), i.e., there is embedding \(\varphi \colon (A, \tau) \mathop{*} (C, \kappa) \to A^\cU\) such that \(\Delta = \varphi \iota\). In fact, the pair \((C, \kappa)\) may be assumed to be any one of a series of well-known C*-probability spaces, see \cite[Theorem 2.6]{robert}. Selflessness is a strong regularity property for C*-probability spaces bundling together many well-studied regularity properties for simple C*-algebras. Importantly, Robert showed in \cite[Theorem 3.1]{robert} that if \((A, \rho)\) is a selfless C*-probability space, then \(A\) is simple and satisfies the following dichotomy: If \(\rho\) is not a trace, then \(A\) is purely infinite, and if \(\rho\) is a trace, then \(A\) has stable rank one and strict comparison of positive elements, and \(\rho\) is the unique 2-quasitrace on \(A\). We are interested in the tracial case since if \(A\) is purely infinite, then any non-central element is a single commutator by \cite{pop} and \cite{cui_ma}. 
	
	We will use the following trick. 
	
	\begin{lemma}
		\label{lemma_abel}
		Let \(A\) be a unital C*-algebra and fix a unitary element \(u \in A\). Let \(x \in A\) and \(n \in \bbN\) and set \[y = \sum_{k=0}^{n-1} \left( 1 - \frac{k}{n} \right) u^k x u^{-k-1}.\] Then 
		\[x - \frac{1}{n} \sum_{k=1}^n u^k x u^{-k} = [y, u] = z - uzu^*,\]
		where \(z = yu\). 
	\end{lemma}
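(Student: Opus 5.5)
This is a direct algebraic verification, so the plan is to expand both sides and telescope, with no earlier results needed. First I dispose of the second equality. With \(z = yu\) we have \(uzu^* = uyuu^* = uy\), hence \(z - uzu^* = yu - uy = [y,u]\). It therefore remains to show that
\[x - \frac{1}{n}\sum_{k=1}^n u^k x u^{-k} = yu - uy.\]

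To compute the commutator, write \(c_k = 1 - k/n\) and \(a_k = u^k x u^{-k}\) for \(k \geq 0\). Since \(u\) is unitary, \(u^{-1} = u^*\), so each summand satisfies \(u^k x u^{-k-1}\cdot u = a_k\) and \(u\cdot u^k x u^{-k-1} = u^{k+1}xu^{-(k+1)} = a_{k+1}\). This gives
\[yu = \sum_{k=0}^{n-1} c_k a_k, \qquad uy = \sum_{k=0}^{n-1} c_k a_{k+1} = \sum_{k=1}^{n} c_{k-1} a_k.\]

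Next I subtract and collect coefficients, which is Abel summation. The coefficient of \(a_0 = x\) is \(c_0 = 1\). For \(1 \le k \le n-1\) the coefficient of \(a_k\) is \(c_k - c_{k-1} = -1/n\). The coefficient of \(a_n\) is \(-c_{n-1} = -1/n\). Hence
\[yu - uy = x - \frac1n \sum_{k=1}^n a_k,\]
which is exactly the claimed identity.

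There is no real obstacle here. The only care needed is the index shift in \(uy\), together with checking that the boundary term \(a_n\) picks up coefficient \(1/n\) and the term \(a_0\) picks up coefficient \(1\). Both hold because \(c_0 = 1\) and \(c_n = 0\).
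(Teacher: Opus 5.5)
Your proposal is correct and takes essentially the same approach as the paper. Both compute \(yu - uy = \sum_{k=0}^{n-1} c_k (a_k - a_{k+1})\) and collect coefficients by Abel summation, and both dispatch the identity \(z - uzu^* = [y,u]\) via \(uzu^* = uy\).
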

	
	\begin{proof}
		By using Abel summation on the scalar and non-scalar part of the terms in the first sum, we have
			\[ [y,u] = \sum_{k=0}^{n-1} \left(1 - \frac{k}{n} \right)(u^k x u^{-k} - u^{k+1}x u^{-(k+1)}) = x - \frac{1}{n} \sum_{k=1}^n u^k x u^{-k}.\] The second equality is trivial. 
	\end{proof}
	
	\begin{theorem}
		Let \((A, \tau)\) be a selfless C*-probability space such that \(\tau\) is a faithful trace. Then commutators are dense in \(\ker \tau\). In fact, any element in \(\ker \tau\) may be arbitrarily well approximated by a commutator of one element with a unitary. 
	\end{theorem}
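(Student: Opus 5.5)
Given $x \in \ker\tau$ and $\varepsilon>0$, the plan is to find a unitary $u \in A$ and $n \in \bbN$ such that the average $\frac{1}{n}\sum_{k=1}^n u^k x u^{-k}$ has norm less than $\varepsilon$. Then \cref{lemma_abel} gives $\|x - [y,u]\| < \varepsilon$, which is the claimed approximation by a commutator of one element with a unitary. So the whole task is a Powers-type averaging statement in $A$: traceless elements can be averaged to near zero by powers of a single unitary.

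To get such a unitary I would first pass through the ultrapower, using selflessness. Take $(C,\kappa)$ to be a C*-probability space containing a Haar unitary $v$, for instance $C = C(\bbT)$ with Lebesgue measure. The freedom to pick $(C,\kappa)$ comes from the remark citing \cite[Theorem 2.6]{robert}. Let $\varphi \colon (A,\tau)*(C,\kappa) \to A^\cU$ be the embedding with $\Delta = \varphi\iota$. In the reduced free product, $x$ is centred and $v$ is a Haar unitary free from $x$.

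The key estimate is the norm of $\frac{1}{n}\sum_{k=1}^n v^k x v^{-k}$ in the reduced free product. The terms $v^k x v^{-k}$ are free-ish conjugates of a centred element. Expanding the square $S_n^*S_n$, where $S_n = \sum_k v^k x v^{-k}$, gives products $v^{k}x^*v^{l-k}xv^{-l}$. For $k\neq l$ these are alternating words of centred elements. A Haagerup-type inequality (or the standard free Powers lemma, \`a la Akemann--Ostrand) should give $\|S_n\| \le C\sqrt{n}\,\|x\|$, or at least $\|S_n\| = o(n)$. This is the main obstacle. I would try to realize the reduced free product on the free product Hilbert space, and split the averaging operator according to whether a vector's first letter lies in $L^2(C)^\circ$ or not. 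The $v^k$ shift the $C$-letters into mutually orthogonal subspaces, so the Powers-style argument (orthogonal ranges giving an $\sqrt{n}$ bound) applies. Failing a clean estimate, I would replace $v$ by a family of free Haar unitaries $v_1,\dots,v_n$. But then the lemma would not directly apply, so I prefer the single-unitary version, using the Haar property of $v$ to make the $v^k$ behave like free generators.

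Once $\|\frac1n\sum_k v^kxv^{-k}\| < \varepsilon$ holds in the free product, I apply $\varphi$. This gives a unitary $w = \varphi(v) \in A^\cU$ with $\|\frac1n\sum_k w^k\Delta(x)w^{-k}\|<\varepsilon$. Lift $w$ to a sequence of unitaries $(u_i)$ in $A$; unitaries lift from ultrapowers, e.g.\ by functional calculus of a lift, which is possible since $A$ has stable rank one. Since the expression is a fixed polynomial in $w, w^*, \Delta(x)$, for $\cU$-many $i$ the corresponding norm with $u_i$ is below $\varepsilon$. Taking $u = u_i$ and invoking \cref{lemma_abel} finishes the proof. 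Faithfulness of $\tau$ is needed so that the GNS-faithful reduced free product framework applies, making norms computed there equal norms in the algebra.
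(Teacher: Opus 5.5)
Your proposal is correct and follows essentially the paper's route: selflessness with $(C(\bbT),\mu)$, then the Powers-type averaging estimate for powers of the free Haar unitary, then \cref{lemma_abel} plus a lift from $A^\cU$; the paper simply cites Dykema's Proposition 3.2 for the averaging estimate, and your orthogonal-ranges sketch does prove it, since for the projection $P$ onto words beginning with a letter from $L^2(A,\tau)^\circ$ one has $(1-P)x(1-P)=0$, the projections $v^kPv^{-k}$, $1\le k\le n$, are mutually orthogonal, and so $\|S_n\|\le(2\sqrt n+1)\|x\|$. The only difference is cosmetic: you lift the unitary first and apply \cref{lemma_abel} inside $A$, whereas the paper applies it in the free product and lifts the resulting commutator (and lifting a unitary from $A^\cU$ needs no stable rank one, only the polar part of an approximately unitary lift).
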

	
	\begin{proof}
		Fix an element \(x \in \ker \tau\) and \(\varepsilon > 0\). Let \(\mu\) denote the state on \(C(\bbT)\) corresponding to the normalized Lebesgue measure on \(\bbT\). Since \((A, \tau)\) is selfless, there is an embedding \(\varphi \colon (A, \tau) \mathop{*} (C(\bbT), \mu) \to A^\cU\) into an ultrapower of \(A\) such that \(\Delta = \varphi \iota,\) following the same notation as above. We have \((\tau \mathop{*} \mu)(\iota(x)) = \tau(x) = 0\). Let \(u\) denote the image of the identity function on \(\bbT\) in \(A \mathop{*} C(\bbT)\). Then, cf. \cite[Proposition 3.2]{dykema}, there is an \(n \in \bbN\) such that \[\norm{\frac{1}{n} \sum_{k=1}^n u^k \iota(x) u^{-k}} < \varepsilon.\] By \cref{lemma_abel},
		\[ \norm{\iota(x) - [y,u]} =\norm{\frac{1}{n} \sum_{k=1}^n u^k \iota(x) u^{-k}} < \varepsilon,\]
		where 
		 \[y = \sum_{k=0}^{n-1} \left( 1-\frac{k}{n} \right) u^k \iota(x) u^{-k-1}.\] Hence, \(\norm{\Delta(x) - [\varphi(y), \varphi(u)]} <\varepsilon\) in \(A^\cU\), which means that there is a unitary \(v \in A\) and element \(z \in A\) such that \(\norm{x - [z,v]} < \varepsilon.\)
	\end{proof} 
	
	%Let \(\sigma\) denote the trace on \(C_\lambda^*(\bbF_\infty)\). Since \((A, \tau)\) is selfless, there is an embedding \(\varphi \colon (A, \tau) \mathop{*} (C_\lambda^*(\bbF_\infty), \sigma) \to A^\cU\) into an ultrapower of \(A\) such that \(\Delta = \varphi \iota,\) following the same notation as above. We have \((\tau \mathop{*} \sigma)(\iota(x)) = \tau(x) = 0\), and so by the proof of \cite[Proposition 3.2]{dykema}, there is a unitary \(u \in A \mathop{*} C_\lambda^*(\bbF_\infty)\) and an \(n \in \bbN\) such that \[\norm{\frac{1}{n} \sum_{k=1}^n u^k \iota(x) u^{-k}} < \varepsilon.\]

\printbibliography
	
\end{document}